\definecolor{PrettyGreen}{RGB}{0, 180, 0}
\edef\restoreparindent{\parindent=\the\parindent\relax}
\title{A big mapping class acting parabolically on the nonseparating curve graph}
\date{}
\author{Federica Fanoni}
\email{federica.fanoni@u-pec.fr}
\address{CNRS, Univ Paris Est Creteil, Univ Gustave Eiffel, LAMAR8050, F-94010 Creteil, France}
\author{Sebastian Hensel}
\email{hensel@math.lmu.de}
\address{Mathematisches Institut der Universit\"at M\"unchen, Theresienstr. 39, 80333 M\"unchen, Germany}
\subjclass{37E30,  	57K20, 37E45}
\newtheorem{thm}{Theorem}[section]
\newtheorem{claim}[thm]{Claim}
\newtheorem{theorintro}{Theorem}
\theoremstyle{definition}
\newcommand{\NC}{\mathcal{NC}}
\newcommand{\NCdagger}{\NC^\dagger}
\newcommand{\ssm}{\smallsetminus}
\DeclareMathOperator{\dist}{d}
\DeclareMathOperator{\distdagger}{\dist^\dagger}
\newcommand{\Z}{\mathbb{Z}}
\newcommand{\R}{\mathbb{R}}
\newcommand{\st}{\;|\;}
\begin{document}
\begin{abstract}
We use fine curve graph tools to prove that there exist parabolic isometries of graphs of curves associated to surfaces of infinite type.
\end{abstract}

\maketitle

\section{Introduction}
A cornerstone result in low-dimensional geometry and topology is the classification by Nielsen and Thurston \cite{thurston_geometry} of mapping classes of finite-type surfaces, into periodic, reducible and pseudo-Anosov elements. A consequence of the classification is that a mapping class is pseudo-Anosov if and only if it has no periodic curve. Looking at the action on the curve graph, this means that the only elements that can act as parabolic or hyperbolic isometries are the pseudo-Anosov ones. In \cite{mm_geometry}, Masur and Minsky showed that pseudo-Anosov mapping classes always act as hyperbolic isometries. As the automorphism group of the curve graph is the (extended) mapping class group \cite{ivanov_automorphisms, korkmaz_automorphisms, luo_automorphisms}, this means that the curve graph has no parabolic isometries.

If the surface is not of finite type, the curve graph has always diameter two. On the other hand, for a large class of infinite-type surface, there are constructions of spaces that could play the role of the curve graph in this setting: infinite-diameter connected Gromov hyperbolic spaces on which the mapping class group acts with unbounded orbits (see \cite{bavard_hyperbolicite, dfv_graphs, av_geometry, rasmussen_uniform, rasmussen_geometry, hqr_big, fgm_homeomorphic, bnv_grand,  schaffercohen_graphs, kopreski_asymptotic}). For these examples, a natural question has been open since roughly a decade (see \cite[Problem 2.55]{aim} for the question stated for an example): do these spaces admit parabolic isometries? It is important to remark that in this setup only partial results about the classification of mapping classes are known (\cite{ccf_endperiodic}, \cite{bft_towards}), making the situation very different from the finite-type case.

Our main result is the existence of a parabolic isometry in the case of the nonseparating curve graph of an infinite-type surface of genus one.

\begin{theorintro}\label{thm:big-parabolic-intro}
Let \(T\) be a torus and let \(S\) be \(T\) with a Cantor set \(C\) and a point \(p_0\) removed. Then there is a homeomorphism \(F\) of \(T\), fixing \(C\) and \(p_0\), such that \(F\) is a parabolic isometry of \(\NCdagger(T)\) and \([F]\) is a parabolic isometry of \(\NC(S)\). 
\end{theorintro}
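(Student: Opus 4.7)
The plan is to construct $F$ as a shift-type homeomorphism along a nested sequence of pairwise disjoint nonseparating simple closed curves $\{\gamma_n\}_{n \geq 0}$ on $T$ accumulating at $p_0$. Concretely, I would choose such a sequence together with pairwise disjoint closed annular neighborhoods $A_n$ of $\gamma_n$ whose diameters shrink to zero, all disjoint from $C$, and arranged so that the nested ``innermost'' annular component between $\gamma_n$ and $\gamma_{n+1}$ is the one containing $p_0$. Define $F$ to send $A_n$ to $A_{n+1}$ via a standard identification of annuli, to fix both $C$ and $p_0$ pointwise, and to extend by a suitable interpolation on the complementary regions. Continuity at $p_0$ and $C$ follows from the shrinking-diameter condition, and $F$ descends to a homeomorphism $[F]$ of $S$.

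To verify parabolicity on both $\NCdagger(T)$ and $\NC(S)$, it suffices to check unbounded orbits and vanishing translation length. The $F$-orbit of $\gamma_0$ is the sequence $\gamma_m = F^m(\gamma_0)$. For the lower bound, one shows $d(\gamma_0, \gamma_m) \to \infty$ in $\NC(S)$ directly: the curves $\gamma_n$ are pairwise non-isotopic in $S$ because they separate $C$ into different configurations, and any path joining $\gamma_0$ to $\gamma_m$ in $\NC(S)$ must traverse the intermediate nested annuli, a fact quantified via subsurface-projection arguments applied to subsurfaces filled by consecutive $\gamma_n$'s. The corresponding lower bound in $\NCdagger(T)$ follows by \cite[Lemma 3.4]{bhw_quasi}: the fine distance equals $\dist_{\NC(T \ssm P_m)}(\gamma_0, \gamma_m)$ for a finite $P_m$ which we may take to contain marking points in each of the first $m$ annuli, and the same projection argument applies.

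For the upper bound on $d(\gamma_0, \gamma_m)$, the disjointness of consecutive $\gamma_n$ and $\gamma_{n+1}$ immediately yields the linear bound $m$. To sharpen this to the sublinear estimate $O(\log m)$ required for vanishing translation length, I would exhibit, for each $k$, an auxiliary nonseparating simple closed curve on $T$ that is simultaneously disjoint from $\gamma_0$ and $\gamma_{2^k}$; such a curve should exist because the annular region between them admits nonseparating shortcuts exploiting the infinite-type features of $S$ and the remaining genus of $T$. A doubling-style telescoping then gives $d(\gamma_0, \gamma_m) = O(\log m)$, so that $F$ and $[F]$ each have translation length zero on their respective graphs.

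The main obstacle I anticipate is producing the sublinear upper bound: the linear estimate is automatic, but the logarithmic improvement requires careful engineering of the nested curves so that many of them can be avoided by a single auxiliary curve, and a matching care that these shortcuts remain nonseparating and essential in both $T$ and $S$. A secondary issue is controlling the $F^{-1}$-orbit of $\gamma_0$: the curves $F^{-m}(\gamma_0)$ for $m \to \infty$ lie outside the nested system and must be shown not to accumulate on any $F$-invariant set that would conflict with parabolicity. Both obstacles appear tractable with explicit choices for the annuli $A_n$ and the interpolation of $F$.
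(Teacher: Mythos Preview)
Your construction cannot get off the ground: on a torus there is no sequence of pairwise disjoint nonseparating simple closed curves whose diameters shrink to zero. Any nonseparating curve on $T$ represents a nontrivial element of $H_1(T;\Z)$, hence is not nullhomotopic, hence cannot be contained in an arbitrarily small disk. Equivalently, disjoint nonseparating curves on a torus are all parallel, and a sequence of parallel curves can only Hausdorff-accumulate onto another parallel curve (or a lamination carrying the same homology class), never onto a single point $p_0$. So the annular neighborhoods $A_n$ with $\operatorname{diam}(A_n)\to 0$ that you need simply do not exist, and with them the shift map $F$ evaporates. The same obstruction blocks any attempt to rescue the idea by working in $S$ instead: a curve nonseparating in $S=T\ssm(C\cup\{p_0\})$ is automatically nonseparating in $T$, so the diameter bound still applies.

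Even if one ignored this, the two analytic steps are not under control. Your lower bound relies on the claim that a path in $\NC(S)$ from $\gamma_0$ to $\gamma_m$ must ``traverse the intermediate nested annuli''; but all your $\gamma_n$ are isotopic in $T$, and subsurface projections to annuli around parallel curves do not separate them in the way you suggest without much more work. Your upper bound is the more serious gap: producing a single nonseparating curve disjoint from both $\gamma_0$ and $\gamma_{2^k}$ is not a matter of ``remaining genus of $T$'' --- the torus has no spare genus once a nonseparating curve is removed --- and the sketch gives no mechanism for the claimed $O(\log m)$.

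The paper takes a completely different route. The map $F$ is built from the suspension flow of a Denjoy circle homeomorphism, dampened near a short flow segment so that one point $p$ becomes fixed; the invariant Cantor set $C$ sits on a fibre. Hyperbolicity on $\NCdagger(T)$ is ruled out not by a sublinear distance estimate but by computing the rotation set of $F$ and invoking the result of \cite{bhmmw_rotation} that hyperbolic action forces a rotation set with nonempty interior, whereas here the rotation set lies on a line of irrational slope. Non-ellipticity on $\NC(S)$ is proved by showing that $F^n(\gamma)$ and a fixed transversal $\gamma'$ are already in minimal position on $S$ (every bigon contains $p$ or a point of $C$), reducing the curve-graph distance in $\NC(S)$ to a fine-curve-graph distance, and then passing to a genus-two branched cover where a foliation argument forces that fine distance to infinity.
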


%Note that the nonseparating curve graph of a finite-genus infinite-type surface is connected and has infinite diameter \cite{av_geometry}, and Rasmussen proved that it is Gromov hyperbolic \cite{rasmussen_uniform}.

The main idea to prove Theorem \ref{thm:big-parabolic-intro} is to relate (non-)hyperbolicity of a mapping class acting on \(\NC(S)\) to (non-)hyperbolicity of a representative homeomorphism acting on \(\NCdagger(T^2)\), and then use the hyperbolicity criterion given by Bowden, Mann, Militon, Webb and the second author in \cite{bhmmw_rotation}.

\section*{Acknowledgements}
The authors are very grateful to the referee for their careful reading, and in particular for finding a mistake in a previous version of this paper.

This research was partly funded by the B\'ezout Labex (reference ANR-10-LABX-58), the Tremplin --- ERC Starting Grant MAGIC (reference  ANR-23-TERC-0007) and the ANR PRC grant GALS (reference ANR-23-CE40-0001), all funded by the Agence nationale de la recherche (ANR). The authors are grateful for their support. 

\section{Preliminaries and notation}
Unless otherwise stated, by \emph{surface} we mean a connected orientable two-manifold without boundary. A \emph{curve} on a surface will be assumed to be simple, closed and non-contractible. A curve is \emph{nonseparating} if it does not disconnect the surface.

The \emph{nonseparating fine curve graph} \(\NCdagger(S)\) of a surface of positive genus \(S\) has nonseparating curves as vertices and edges correspond to disjointness. The \emph{nonseparating curve graph} \(\NC(S)\) has homotopy classes of nonseparating curves as vertices and two classes are adjacent if they admit disjoint representatives. If $S$ is a torus, edges instead correspond to curves intersecting at most once.
We will denote by \(\distdagger\) (respectively, \(\dist\)) the distance in \(\NCdagger(S)\) (respectively, \(\NC(S)\)).

All the graphs mentioned above are Gromov hyperbolic (by work of Rasmussen \cite{rasmussen_uniform} and Bowden, Hensel and Webb \cite{bhw_quasi}), which allows us to talk about their boundary and to classify their isometries. We recall here the definitions and results related to Gromov hyperbolic spaces that we will need in the article. For more details, we refer to \cite{bh_metric} and \cite{vaisala_gromov}.

\subsection{Gromov hyperbolic spaces}

Let \(X\) be a metric space. We say that it is \emph{\(\delta\)-hyperbolic} if any side of any geodesic triangle is contained in the \(\delta\)-neighborhood of the union of the other two sides. We say that \(X\) is \emph{Gromov hyperbolic} if it is \(\delta\)-hyperbolic for some \(\delta\geq 0\). For the remainder of this section, we assume that \(X\) is a Gromov hyperbolic space.

For any isometry \(f\) of \(X\) and any point \(x\in X\) the limit \[\lim_{n\to\infty}\frac{\dist(x,f^n(x))}{n}\]
exists and is independent of \(x\). We call this limit the \emph{asymptotic translation length} of \(f\).

An isometry \(f\) of \(X\) is of one of three types:
\begin{itemize}
\item \emph{elliptic}, if it has bounded orbits;
\item \emph{parabolic}, if it has no bounded orbits, but its asymptotic translation length is zero;
\item \emph{hyperbolic}, if its asymptotic translation length is positive.
\end{itemize}

\subsection{Rotation sets}
Let \(F\) be a homotopically trivial homeomorphism of the torus \(T\simeq \R^2/\Z^2\). Choose a lift \(\tilde{F}\) of \(F\) to the universal cover \(\R^2\). The \emph{rotation set} \(\rho(\tilde{F})\) of \(\tilde{F}\) is the set of all limits of converging sequences of the form
\[\frac{\tilde{F}^{n_k}(x_k)-x_k}{n_k},\]
where \(\{x_k\}_k\) is a sequence of points in the plane and \(\{n_k\}_k\) is a sequence of integers going to infinity. By a result of Misiurewicz and Ziemian \cite{mz_rotation}, \(\rho(\tilde{F})\) coincides with the convex hull of the \emph{pointwise rotation set} of \(\tilde{F}\), which is the collection of all limit points of converging subsequences of the form
\[\frac{\tilde{F}^{n_k}(x)-x}{n_k},\]
where \(\{n_k\}_k\) is a sequence of integers going to infinity.

If we choose another lift \(\tilde{F}'\) of \(F\), the rotation sets of \(\tilde{F}\) and \(\tilde{F}'\) differ only by an integer translation. We can therefore define the \emph{rotation set} \(\rho(F)\) of \(F\) to be \(\rho(\tilde{F})\) modulo \(\Z^2\), for any lift \(\tilde{F}\) of \(F\). In particular, properties such as having empty interior or being one-dimensional are well-defined for rotation sets of homotopically trivial homeomorphisms of \(T\).

\section{Proof of Theorem \ref{thm:big-parabolic-intro}}\label{sec:parabolic}
To prove Theorem \ref{thm:big-parabolic-intro}, we will give an explicit construction of a parabolic mapping class.

Let \(f:S^1\to S^1\) be a Denjoy map, arising from a rotation of some angle \(\alpha\notin\mathbb{Q}\) by blowing up the orbit of a point. Denote by \(K\) the invariant Cantor set. Let \(T=S^1\times S^1\) be the mapping torus of \(f\) and consider the vertical vector field \(V\) and the associated flow \(\varphi_t\). The Cantor set \(C:=K\times \{0\}\) is invariant under \(\varphi_1\).

Pick a point \(p=(x_0,0)\notin C\) and \(\varepsilon>0\) sufficiently small so that the \(\varepsilon\)-neighborhood \(U\) of \(\{\varphi_t(p)\st t\in [0,1/4]\}\) is contained in the complement of \(\{\varphi_t(C)\st t\in\R\}\), and disjoint from $S^1 \times \left\{\frac12\right\}$. Consider a smooth function \(\rho:T\to\R_{\geq 0}\) such that:
\begin{itemize}
\item \(\rho\) is positive outside \(\{\varphi_t(p)\st t\in [0,1/4]\}\);
\item \(\rho\) is zero on \(\{\varphi_t(p)\st t\in [0,1/4]\}\); 
\item \(\rho\) is one outside \(U\).
\end{itemize}
Then the time-one map \(F\) of the dampened vector field \(\rho V\) is an orientation-preserving homeomorphism of \(S:=T\ssm C\ssm \{p\}\) and it is homotopic to the identity. To prove Theorem \ref{thm:big-parabolic-intro} our goal is to show:

\begin{thm}\label{thm:big-parabolic}
The map \(F\) acts parabolically on \(\NCdagger(T)\) and induces a mapping class acting parabolically on \(\NC(S)\).
\end{thm}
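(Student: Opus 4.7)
The proof consists in verifying, for each of the two graphs, the two conditions defining a parabolic isometry: zero asymptotic translation length and the existence of an unbounded orbit.

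My first step would address the translation length on $\NCdagger(T)$ via the rotation set of $F$, viewed as a homotopically trivial homeomorphism of $T\cong T^2$. The key observation is that the dampened flow $\rho V$ has the same oriented flow lines as $V$ (only the speed is changed), so every $F$-orbit lies on a $V$-flow line and its rotation vector is determined by the time-average $\bar\rho$ of $\rho$ along that orbit. Points on the fixed arc $A=\{\varphi_t(p):t\in[0,1/4]\}$ have $\bar\rho=0$ and thus rotation vector $(0,0)$; points on $C$ have $\bar\rho=1$ and rotation vector $v$ corresponding to the Denjoy rotation number $\alpha$; all other orbits give vectors $\bar\rho\cdot v$ on the segment $[(0,0),v]$. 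Hence $\rho(F)$ is a line segment with empty interior, and by the main result of \cite{bhmmw_rotation} this rules out $F$ being hyperbolic on $\Cdagger(T)$, giving translation length zero on $\NCdagger(T)$ as well.

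For the translation length on $\NC(S)$, I would instead use the auxiliary curve $\alpha_0=S^1\times\{1/2\}$, which lies in $S$. Choosing the dampening function $\rho$ carefully (so that in unit time the flow from $\alpha_0$ does not dive significantly below the level $y=1/2$), one can arrange that each iterate $F^n(\alpha_0)$ is contained in an annulus $S^1\times[y^*,1/2]$ for some fixed $y^*>0$. Since this annulus is disjoint from $C\cup\{p\}\subset S^1\times\{0\}$, it witnesses an isotopy in $S$ from $\alpha_0$ to $F^n(\alpha_0)$, so $[\alpha_0]$ is a fixed vertex of $\NC(S)$ under $[F]$, and the translation length is $0$.

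Next, to exhibit an unbounded orbit in both graphs simultaneously, I would take a curve $\beta$ crossing $A$ once transversely at an interior point (say $\varphi_{1/8}(p)$) and disjoint from $C\cup\{p\}$. Outside $U$, the map $F$ agrees with $\varphi_1$, which shifts horizontal coordinates by the Denjoy homeomorphism $f$; on $A$, however, $F$ is the identity. This sheared behavior makes $F$ behave like a fractional Dehn twist around $A$ with twist coefficient $\alpha$, so under iteration $F^n(\beta)$ winds around $A$ more and more, and the geometric intersection number $i_T(\beta,F^n(\beta))$ grows linearly in $n$. Standard intersection-versus-distance estimates in the fine curve graph then give $d_{\NCdagger(T)}(\beta,F^n(\beta))\to\infty$. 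In $\NC(S)$, the bigons witnessing this winding accumulate toward the endpoint $p\in A$ and cannot be removed by any homotopy in $S$ (since $p$ is a puncture), so the same linear growth of intersection number persists in $S$, yielding $d_{\NC(S)}([\beta],[F^n\beta])\to\infty$.

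The main obstacle is making rigorous the ``fractional Dehn twist with irrational coefficient $\alpha$'' heuristic, since $F$ is not a literal twist supported on an annulus: the shear zone is the topologically richer neighborhood $U$ of $A$, and the twisting coefficient is governed by the Denjoy dynamics rather than an integer. Translating the accumulated horizontal shift $f^n$ near $\partial U$ into a concrete lower bound on new intersections of $F^n(\beta)$ with $\beta$ requires careful bookkeeping — in particular, tracking how each iterate moves the two local strands of $\beta$ on opposite sides of $A$ in opposite directions along the Denjoy orbit of $x_0$. A secondary difficulty is ensuring that the ``bounded-strip'' claim used for the $\NC(S)$ translation length survives iteration, which requires choosing the damping profile $\rho$ so that the inductive strip $S^1\times[y^*,1/2]$ is preserved by $F$.
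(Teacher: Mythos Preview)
Your Step~1 (the rotation set argument for zero translation length on $\NCdagger(T)$) is fine and matches the paper's Claim~\ref{claim:rot-set}. But Step~2 contains a fatal conceptual error that invalidates the whole plan.

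You propose to show that $[\alpha_0]=[S^1\times\{1/2\}]$ is a \emph{fixed} vertex of $\NC(S)$ under $[F]$. If that were true, $[F]$ would have a bounded orbit and would therefore be \emph{elliptic}, directly contradicting the theorem you are trying to prove. An isometry of a metric space cannot have one bounded orbit and another unbounded one: all orbits are within bounded distance of each other. So your Steps~2 and~3 are mutually inconsistent, regardless of the details. Moreover, the specific claim that $F^n(\alpha_0)$ stays in a fixed annulus $S^1\times[y^*,1/2]$ disjoint from $C\cup\{p\}$ is false: under the dampened flow the generic point of $\alpha_0$ wraps once around the torus while the point over $f^{-1}(x_0)$ gets caught approaching $p$, so already $F(\alpha_0)$ has a ``tongue'' reaching arbitrarily close to $p$, and further iterates produce more tongues at $f^{-k}(x_0)$. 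In fact the paper proves precisely the opposite of your claim: the $[F]$-orbit of $[\gamma]=[S^1\times\{1/2\}]$ is \emph{unbounded} in $\NC(S)$ (Claim~\ref{claim:not-elliptic}).

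The correct logic, as in the paper, is this. Step~1 rules out hyperbolicity on $\NCdagger(T)$; since $\pi_{C\cup\{p\}}:\NCdagger(T)\to\NC(S)$ is $1$-Lipschitz, the translation length on $\NC(S)$ is also zero, so your Step~2 is unnecessary as well as wrong. What remains is to exhibit an unbounded orbit in $\NC(S)$ (which then lifts to an unbounded orbit in $\NCdagger(T)$ via the same $1$-Lipschitz projection). The paper does this with $\gamma=S^1\times\{1/2\}$: it first checks that $F^n(\gamma)$ and $\gamma'=S^1\times\{3/4\}$ are in minimal position in $S$ (every bigon traps $p$ or a point of $C$), so that $\dist_{\NC(S)}([\gamma],[F^n\gamma])\geq\distdagger(\gamma,F^n\gamma)-1$, and then proves $\distdagger(\gamma,F^n\gamma)\to\infty$ via a branched-cover argument using that $F^n(\gamma)$ avoids arbitrarily long segments of the suspended Denjoy lamination. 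Your ``fractional Dehn twist'' heuristic in Step~3 does not capture this mechanism: the growth of $\distdagger$ comes from the irrational foliation/lamination structure, not from twisting around a single annulus, and raw intersection-number growth does not by itself bound fine-curve-graph distance without a minimal-position statement of the kind the paper establishes.
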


The theorem follows from two claims:

\begin{claim}\label{claim:rot-set}
The rotation set of \(F\) acting on \(\NCdagger(T)\) is contained in a line.
\end{claim}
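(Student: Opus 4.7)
The plan is to show that the rotation set $\rho(F)$ lies inside a one-dimensional affine subspace of $\mathbb{R}^2$ by exploiting that $F$ is the time-one map of the flow $\rho V$, whose orbits coincide setwise with those of the original vertical flow $V$. Fix an identification $T \cong \mathbb{R}^2/\mathbb{Z}^2$ and let $\tilde F$, $\tilde\varphi^V_s$ and $\tilde\varphi^{\rho V}_s$ be the lifts to $\mathbb{R}^2$ determined by the natural isotopy from the identity to $F$ through the flow $\rho V$. Because $\rho$ is smooth and nonnegative on the compact manifold $T$, it is bounded above by some $M$, so the two flows share orbits but $\rho V$ traverses each orbit at most $M$ times as fast as $V$. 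Concretely, for every $x \in \mathbb{R}^2$ and $n \in \mathbb{N}$ there is a reparametrisation time $\tau_n(x) \in [0, nM]$ with
\[
\tilde\varphi_n^{\rho V}(x) \;=\; \tilde\varphi^V_{\tau_n(x)}(x),
\]
so the displacement of $F^n$ in the universal cover reduces to understanding displacements of the undampened $V$-flow.

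The key estimate is a uniform bounded-deviation property for the lifted $V$-flow: there exist $v \in \mathbb{R}^2$ and $C \geq 0$ with
\[
\bigl\lvert\, \tilde\varphi^V_s(x) - x - s v \,\bigr\rvert \;\leq\; C \quad \text{for all } x \in \mathbb{R}^2 \text{ and } s \geq 0.
\]
In the mapping-torus coordinates on $T$ the $V$-flow moves one fiber per unit time while inducing $f$ on fibers, so $v$ is (up to a linear change of basis) proportional to $(\alpha,1)$ with $\alpha$ the rotation number of the Denjoy map $f$. The crucial input, and the main obstacle, is upgrading the generic estimate $\tilde f^n(x) - x - n\alpha = o(n)$, valid for any orientation-preserving homeomorphism of $S^1$ with rotation number $\alpha$, to the uniform $O(1)$ bound needed here. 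This is precisely where the \emph{Denjoy} structure of $f$ matters: since $f$ is obtained by blowing up a rigid rotation of angle $\alpha$, the deviation $\tilde f^n(x) - x - n\alpha$ is controlled by the total length of the inserted blow-up intervals, which is at most $1$. Continuous-time interpolation over unit intervals then extends the estimate to all real $s$.

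Combining the two steps gives, for every $x \in \mathbb{R}^2$ and $n \geq 1$,
\[
\frac{\tilde F^n(x) - x}{n} \;=\; \frac{\tau_n(x)}{n}\, v \;+\; O\!\left(\tfrac{1}{n}\right),
\]
with the error uniform in $x$. Since $\tau_n(x)/n \in [0, M]$, any subsequential limit of $(\tilde F^{n_k}(x_k) - x_k)/n_k$ has the form $\sigma v$ with $\sigma \in [0, M]$. By the Misiurewicz--Ziemian theorem recalled in Section~2, $\rho(\tilde F)$ equals the convex hull of such limit points, hence lies in the line $\mathbb{R} v \subset \mathbb{R}^2$. Reducing modulo $\mathbb{Z}^2$ yields Claim~\ref{claim:rot-set}.
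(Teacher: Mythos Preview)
Your argument is correct and follows essentially the same route as the paper: both use that the $F$--orbits lie on the lifted $V$--flow lines (your reparametrisation $\tau_n$, the paper's ``broken lines'') together with the uniform bound $|\tilde f^{\,n}(x)-x-n\alpha|<1$ to conclude that every rotation vector is a multiple of $(\alpha,1)$. One small remark: that $O(1)$ deviation estimate holds for \emph{any} orientation-preserving circle homeomorphism with rotation number $\alpha$, so the Denjoy structure is not actually needed at the step you single out as the main obstacle.
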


\begin{claim}\label{claim:not-elliptic}
There are no bounded orbits of \([F]\) acting on \(\NC(S)\).
\end{claim}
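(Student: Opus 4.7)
The plan is to exhibit a simple nonseparating curve \(\alpha_0 \subset S\) whose \([F]\)-orbit has infinite diameter in \(\NC(S)\). Since \([F]\) is an isometry of a Gromov hyperbolic graph, producing a single unbounded orbit rules out the elliptic case.

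I would take \(\alpha_0\) to be a nonseparating simple closed curve on \(T\) wrapping once around the suspension direction of the mapping torus, with its vertical segment at some \(x_1 \in S^1 \ssm K\) chosen far from \(x_0\), so that \(\alpha_0 \cap U = \emptyset\) and \(\alpha_0 \cap (C \cup \{p\}) = \emptyset\). Outside \(U\), \(F\) agrees with the time-one map \(\varphi_1\), which on the base fibre acts as the Denjoy map \(f\). As long as an iterate \(F^k(\alpha_0)\) remains disjoint from \(U\), one has \(F^k(\alpha_0) = \beta_{f^k(x_1)}\), a translated copy of \(\alpha_0\) of the same topological type.

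The minimality of \(f|_K\) forces the orbit \(\{f^n(x_1)\}\) to visit the \(x_0\)-neighborhood in \(S^1\) with positive linear density, so eventually \(F^n(\alpha_0)\) starts to cross \(U\), and at each such crossing the damping effect of \(F\) adds a ``spike'' to the image: a near-full loop in the suspension direction sitting next to the arc \(A\), arising because \(F\) is pointwise fixed on \(A\) while coinciding with \(\varphi_1\) on \(\partial U\). Because this spike encircles a cluster of points of \(C\) adjacent to \(x_0\) (and not merely the puncture \(p\), which would be harmless since \([F] = 1\) in \(\mathrm{MCG}(T \ssm \{p\})\)), it cannot be undone by isotopy in \(S\). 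Successive spikes therefore accumulate, making \(F^n(\alpha_0)\) spiral around neighborhoods of \(C\)-points with winding growing linearly in \(n\).

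To convert this into unbounded distance in \(\NC(S)\), I would fix a point \(c \in K\) close to \(x_0\) and use annular subsurface projection: the projection \(\pi_{A_c}(F^n(\alpha_0))\) to the annulus around a peripheral loop at \(c\) records the winding number of \(F^n(\alpha_0)\) around \(c\), which grows linearly along a suitable subsequence of \(n\). The Masur--Minsky-type inequality then forces \(\dist_{\NC(S)}(\alpha_0, F^n(\alpha_0)) \to \infty\). The main obstacle is ruling out cancellation between successive spikes, for which I would lift to the universal cover \(\R^2\) and use that the rotation set of \(\tilde F\) lies on a line of irrational slope (Claim \ref{claim:rot-set}): this monotonicity along a one-dimensional direction prevents the accumulated winding from oscillating to zero. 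As a safety net, a contradiction route via Denjoy minimality is available: a bounded orbit would force \([F]\) to preserve, up to bounded error, a finite-type witness subsurface of \(S\), whose finite end set would yield an \([F]\)-invariant finite subset of \(C \cup \{p\}\) beyond \(\{p\}\), contradicting minimality of \(f|_K\).
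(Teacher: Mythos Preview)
Your central tool---annular subsurface projection to a ``peripheral loop at \(c\in C\)''---does not exist here. Since \(C\) is a Cantor set, every \(c\in C\) is accumulated by other points of \(C\), so no simple closed curve in \(S\) bounds a once-punctured disk about \(c\); any small disk in \(T\) around \(c\), once \(C\) is removed, is an infinite-type planar surface, not an annulus. Hence \(\pi_{A_c}\) is undefined, and the Masur--Minsky-type inequality you invoke (which in any case is not established for \(\NC\) of infinite-type surfaces) cannot be applied. The safety-net route has the same defect: boundedness of an \([F]\)-orbit in \(\NC(S)\) for infinite-type \(S\) does not, by any known mechanism, produce a finite-type witness subsurface or a finite \([F]\)-invariant subset of \(C\cup\{p\}\), so the appeal to minimality of \(f|_K\) has nothing to act on.

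The paper avoids subsurface projections entirely. It works with the \emph{horizontal} curve \(\gamma=S^1\times\{\tfrac12\}\), and checks in the universal cover that every bigon between \(F^n(\gamma)\) and the parallel curve \(\gamma'=S^1\times\{\tfrac34\}\) contains \(p\) or a point of \(C\); thus the two are in minimal position on \(S\), giving \(\dist([\gamma],[F^n\gamma])\geq \distdagger(\gamma,F^n\gamma)-1\). Unboundedness of the right-hand side is then shown by contradiction inside fine curve graphs of \emph{closed} surfaces, where the needed tools exist: one passes to a genus-two branched cover \(S_2\to T\) and a further finite cover \(X\to S_2\), collapses the suspension of \(K\) to an irrational-slope foliation of a square-tiled \(\hat X\), and shows that any sufficiently long leaf segment together with any lift of \(\gamma\) fills \(X\). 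Since \(F^n(\gamma)\) is disjoint from arbitrarily long leaf segments, its elevations cannot remain at bounded fine distance from those of \(\gamma\), a contradiction.
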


Indeed, Claim \ref{claim:rot-set}, together with \cite[Theorem 1.3]{bhmmw_rotation}, implies that \(F\) does not act hyperbolically on \(\NCdagger(T)\) and therefore (since the projection \(\NCdagger(T)\to\NC(S)\) is distance non-increasing), \([F]\) does not act hyperbolically on \(NC(S)\). By Claim \ref{claim:not-elliptic}, \([F]\) is not an elliptic element, so it is parabolic. In particular, \(F\) is not elliptic either, and therefore it is parabolic as well.

Let us prove the claims.

\begin{proof}[Proof of Claim \ref{claim:rot-set}]
The intuition is that \(F\) preserves the flow-lines of \(V\), so the rotation set must be one-dimensional. 

More precisely, we can pick a lift \(\tilde{f}:\R\to\R\) of \(f\) such that \(\tilde{f}(0)\geq 0\) and is as small as possible. Using this, we can construct a lift \(\tilde{F}\) of \(F\) such that \(\tilde{F}(x,y)=(\tilde{f}(x),y+1)\) for every \(x\in \tilde{C}\), where \(\tilde{C}\) is the lift of the Cantor set \(C\).

Given any \((x,y)\in [0,1]^2\), its sequence of iterates is contained in a broken line of the form
\[\{\dots (\tilde{f}^{-m}(z),-m),\dots, (\tilde{f}^{-1}(z),-1), (z,0), (\tilde{f}(z),1),\dots, (\tilde{f}^{m}(z),m), \dots\}.\]
By construction of the Denjoy map, \(|\tilde{f}^m(z)-(z+m\alpha)|<1\) for every \(z\) and \(m\), so the average slope of the broken line is \(\frac{1}{\alpha}\). This implies that
\[\lim_{n\to\infty}\frac{\tilde{F}^n(x,y)-(x,y)}{n}\]
is proportional to \((\alpha,1)\). As a consequence, the rotation set of \(F\) is contained in the line \(y=\frac{x}{\alpha}\).
\end{proof}
 
\begin{proof}[Proof of Claim \ref{claim:not-elliptic}]
  Let \(\gamma\) be the curve \(S^1\times\left\{\frac{1}{2}\right\}\) and \(\gamma'\) the curve \(S^1\times\left\{\frac{3}{4}\right\}\). We will show that the orbit in \(\NC(S)\) of \([\gamma]\) under the action of \([F]\) is unbounded, which implies that every \([F]\)-orbit is unbounded.

To do so, we first study the intersection pattern
    between $\gamma'$ and $F^n(\gamma)$ for large $n$. This is easier
    to do in the universal cover $\mathbb{R}^2$ of the torus. Let
    $\widetilde{V}$ be a lift of the vector field $V$ used in the
    construction of $F$, and let $\widetilde{F}$ be a lift of
    $F$. This is time-one map of the dampened vector field
    $\rho\circ \pi \widetilde{V}$ where $\pi$ is the covering
    projection. Hence, $\widetilde{F}^n$ is the time--$n$--map of that
    vector field. Intuitively, this map has the effect of ``flowing
    along $\widetilde{V}$, but getting caught at each lift of $p$''
    --- see Figure \ref{fig:lifts}.
    
    From this description we can see that every bigon (in \(S\)) that
    \(F^n(\gamma)\) forms a bigon with the curve \(\gamma'\) contains
    the marked point $p$ or a point of $C$. In other words,
    \(F^n(\gamma)\) and \(\gamma'\) are in minimal position on $T\setminus (\{p\} \cup C)$.
    So by \cite[Lemma 3.4]{bhw_quasi} we have:
    \[\dist([\gamma],[F^n(\gamma)])=\dist([\gamma'],[F^n(\gamma)])\geq\distdagger(\gamma',F^n(\gamma))\geq \distdagger(\gamma,F^n(\gamma))-1.\]

%  Hence, for any \(n\geq 1\), \(F^n(\gamma)\) forms a bigon (in \(S\))
%  with the curve \(\gamma'=S^1\times\frac{3}{4}\). For every \(n\), we
%  can modify \(F^n(\gamma)\) to a curve \(\beta_n\) in the same
%  homotopy class in \(S\) as \(F^n(\gamma)\), obtained by pushing up
%  \(F^n(\gamma)\) a little and removing the bigon with
%  \(\gamma'\). Since \(F^n(\gamma)\) is simple, we can choose such a
%  \(\beta_n\) which is disjoint\footnote{It is enough to ensure that
%    \(\beta_n\) lifts to a curve between two consecutive lifts of
%    \(F^n(\gamma)\) in the universal cover of \(T\).} from
%  \(F^n(\gamma)\)  --- see Figure \ref{fig:lifts}.Then \(\beta_n\) and
%  \(\gamma'\) are in minimal position in \(S\), so
%
%\[\dist([\gamma],[F^n(\gamma)])=\dist([\gamma'],[\beta_n])=\distdagger(\gamma',\beta_n).\]
%As \(\distdagger(\beta_n,F^n(\gamma))=1=\distdagger(\gamma, \gamma')\), we have
%\[\dist([\gamma],[F^n(\gamma)])\geq\distdagger(\gamma,F^n(\gamma))+2.\]

\begin{figure}[h]
\begin{overpic}[scale=2]{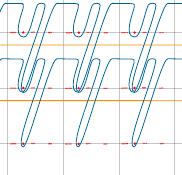}
\put(102, 40){\color[RGB]{255, 140, 0}\(\gamma'\)}
\put(102, 70){\color[RGB]{255, 140, 0}\(\gamma'\)}
\put(-17, 90){\color[RGB]{0, 99, 150}\(F^2(\gamma)\)}
\put(-17, 60){\color[RGB]{0, 99, 150}\(F^2(\gamma)\)}

\end{overpic}
\caption{Lifts of \(\gamma'\) (in orange) and of \(F^2(\gamma)\) (in blue) to the universal cover of \(T\). The lifts of \(p\) and \(C\) are in red.}\label{fig:lifts}
\end{figure}

If by contradiction the \([F]\)-orbit of \([\gamma]\) is bounded, there is some \(K\geq 0\) so that for every \(n\)
\[\distdagger(\gamma,F^n(\gamma)) \leq \dist([\gamma],[F^n(\gamma)])+1\leq K.\]

Fix a genus-two branched cover \(S_2\to T\), branched over a point in the Cantor set; then the elevations of \(\gamma\) and of \(F^n(\gamma)\) also have bounded distance if \(\NCdagger(S_2)\). By \cite{bhmmw_rotation}, there is a finite-sheeted cover \(X\to S_2\) and elevations of the elevations of \(\gamma\) and \(F^n(\gamma)\) which are disjoint.

Let \(\tilde{\gamma_1},\dots,\tilde{\gamma_N}\subset X\) be the elevations of \(\gamma\). Let \(\lambda\subset T\) be the flow lines of all points in \(C\):
\[\lambda:=\{\varphi_t(C)\st t\in\R\}.\]
Denote by \(\lambda_2\) the lift of \(\lambda\) in \(S_2\) and \(\lambda_X\) the lift to \(X\). Fix a metric on \(X\). We will show that any sufficiently long segment \(L\) of \(\lambda_X\) is so that, for every \(i\), \(\tilde{\gamma_i}\cup L\) intersects all nonseparating curves of \(X\).

As a consequence, if \(\{\beta_n\}_n\subset\NCdagger(T)\) is a sequence of curves which are disjoint from leaf segments of \(\lambda\) of length going to infinity, \(\distdagger(\beta_n,\gamma)\to\infty\). As the \(F^n(\gamma)\) satisfy the condition of being disjoint from such leaf segments, this concludes the proof of Claim \ref{claim:not-elliptic}.

To prove our statement, consider the map \(c:T\to S^1\times S^1=:\hat{T}\) which collapses the bi-infinite strips obtained by flowing the complementary segments of \(C\). The covering maps \(X\to S_2\to T\) induce covering maps \(\hat{X}\to\hat{S}_2\to\hat{T}\), which commute with analogous collapsing maps \(c_2:S_2\to\hat{S}_2\) and \(c_X:X\to\hat{X}\):
\[
\xymatrix{
\lambda_X,\tilde{\gamma_1},\dots,\tilde{\gamma}_N\subset\hspace{-1cm}& X \ar[r]^{c_X}\ar[d] & \hat{X} \ar[d]&\hspace{-1.2cm}\supset \hat{\lambda}_X\\
\lambda_2\subset\hspace{-3cm}&S_2 \ar[r]^{c_2}\ar[d] & \hat{S}_2 \ar[d]&\hspace{-1.2cm}\supset \hat{\lambda}_2\\
\lambda,\gamma\subset\hspace{-2.8cm} &T \ar[r]^{c} & \hat{T}&\hspace{-1.4cm}\supset \hat{\lambda}
}
\]
We think of the surfaces \(\hat{S}_2\) and \(\hat{X}\) as square-tiled surfaces with the tiling coming from the projections to \(\hat{T}\). Define \(\hat{\lambda}:=c(\lambda)\), \(\hat{\lambda}_2:=c_2(\lambda_2)\) and \(\hat{\lambda}_X:=c_X(\lambda_X)\). Then \(\hat{\lambda}\) is an irrational slope foliation of the torus, and \(\hat{\lambda}_2\) and \(\hat{\lambda}_X\) are irrational slope foliations of square-tiled surfaces.

Set \(\hat{\gamma_i}:=c_X(\tilde{\gamma_i})\). Since \(\hat{\lambda}_X\) is an irrational slope foliation, there is some length \(\hat{\ell}\) so that for every segment \(\hat{L}\) of \(\hat{\lambda_X}\) of length at least \(\ell\), the complementary components are \(\hat{L}\cup\hat{\gamma_i}\) rectangles horizontal sides and sides with irrational slope, possibly with one or two irrational slope slits (if \(\hat{L}\) does not start or end on the curve). Let \(\ell\) be such that every segment of \(\lambda_X\) of length at least \(\ell\) projects onto a segment of \(\hat{\lambda_X}\) of length at least \(\hat{\ell}\). Fix a segment \(L\subset \lambda_X\) of length at least \(\ell\). Suppose by contradiction that \(\beta\) is a nonseparating curve disjoint from \(L\cup\tilde{\gamma_i}\), i.e.\ \(\beta\) is contained in a connected component of \(X\ssm (L\cup\tilde{\gamma_i})\). Such a component comes from a component of \(\hat{X}\ssm (\hat{L}\cup\hat{\gamma_i})\), possibly blown-up along some irrational slope segment. In particular, any such component is contractible and cannot contain a nonseparating curve, a contradiction.
\end{proof}

\bibliographystyle{plain}
\bibliography{bibliography}

\begin{thebibliography}{10}

\bibitem{aim}
{AimPL}: Surfaces of infinite type.
\newblock Available at \url{http://aimpl.org/genusinfinity}, 2019.

\bibitem{av_geometry}
Javier Aramayona and Ferr\'{a}n Valdez.
\newblock On the geometry of graphs associated to infinite-type surfaces.
\newblock {\em Math. Z.}, 289(1-2):309--322, 2018.

\bibitem{bnv_grand}
Assaf Bar-Natan and Yvon Verberne.
\newblock The grand arc graph.
\newblock {\em Math. Z.}, 305(2):Paper No. 20, 21, 2023.

\bibitem{bavard_hyperbolicite}
Juliette Bavard.
\newblock Hyperbolicit\'{e} du graphe des rayons et quasi-morphismes sur un
  gros groupe modulaire.
\newblock {\em Geom. Topol.}, 20(1):491--535, 2016.

\bibitem{bft_towards}
Mladen {Bestvina}, Federica {Fanoni}, and Jing {Tao}.
\newblock {Towards Nielsen-Thurston classification for surfaces of infinite
  type}.
\newblock {\em arXiv e-prints}, page arXiv:2303.12413, March 2023.

\bibitem{bhmmw_rotation}
Jonathan Bowden, Sebastian Hensel, Kathryn Mann, Emmanuel Militon, and Richard
  Webb.
\newblock Rotation sets and actions on curves.
\newblock {\em Adv. Math.}, 408:Paper No. 108579, 33, 2022.

\bibitem{bhw_quasi}
Jonathan Bowden, Sebastian~Wolfgang Hensel, and Richard Webb.
\newblock Quasi-morphisms on surface diffeomorphism groups.
\newblock {\em J. Amer. Math. Soc.}, 35(1):211--231, 2022.

\bibitem{bh_metric}
Martin~R. Bridson and Andr\'{e} Haefliger.
\newblock {\em Metric spaces of non-positive curvature}, volume 319 of {\em
  Grundlehren der mathematischen Wissenschaften [Fundamental Principles of
  Mathematical Sciences]}.
\newblock Springer-Verlag, Berlin, 1999.

\bibitem{ccf_endperiodic}
John Cantwell, Lawrence Conlon, and Sergio~R. Fenley.
\newblock Endperiodic automorphisms of surfaces and foliations.
\newblock {\em Ergodic Theory Dynam. Systems}, 41(1):66--212, 2021.

\bibitem{dfv_graphs}
Matthew~Gentry Durham, Federica Fanoni, and Nicholas~G. Vlamis.
\newblock Graphs of curves on infinite-type surfaces with mapping class group
  actions.
\newblock {\em Ann. Inst. Fourier (Grenoble)}, 68(6):2581--2612, 2018.

\bibitem{fgm_homeomorphic}
Federica Fanoni, Tyrone Ghaswala, and Alan McLeay.
\newblock Homeomorphic subsurfaces and the omnipresent arcs.
\newblock {\em Ann. H. Lebesgue}, 4:1565--1593, 2021.

\bibitem{hqr_big}
Camille Horbez, Yulan Qing, and Kasra Rafi.
\newblock Big mapping class groups with hyperbolic actions: classification and
  applications.
\newblock {\em J. Inst. Math. Jussieu}, 21(6):2173--2204, 2022.

\bibitem{ivanov_automorphisms}
Nikolai~V. Ivanov.
\newblock Automorphisms of complexes of curves and of {T}eichm\"{u}ller spaces.
\newblock In {\em Progress in knot theory and related topics}, volume~56 of
  {\em Travaux en Cours}, pages 113--120. Hermann, Paris, 1997.

\bibitem{kopreski_asymptotic}
Michael~C. {Kopreski}.
\newblock {The asymptotic dimension of the grand arc graph is infinite}.
\newblock {\em arXiv e-prints}, page arXiv:2402.03603, February 2024.

\bibitem{korkmaz_automorphisms}
Mustafa Korkmaz.
\newblock Automorphisms of complexes of curves on punctured spheres and on
  punctured tori.
\newblock {\em Topology Appl.}, 95(2):85--111, 1999.

\bibitem{luo_automorphisms}
Feng Luo.
\newblock Automorphisms of the complex of curves.
\newblock {\em Topology}, 39(2):283--298, 2000.

\bibitem{mm_geometry}
Howard~A. Masur and Yair~N. Minsky.
\newblock Geometry of the complex of curves. {I}. {H}yperbolicity.
\newblock {\em Invent. Math.}, 138(1):103--149, 1999.

\bibitem{mz_rotation}
Micha{\l} {Misiurewicz} and Krystyna {Ziemian}.
\newblock Rotation sets for maps of tori.
\newblock {\em J. London Math. Soc. (2)}, 40(3):490--506, 1989.

\bibitem{rasmussen_uniform}
Alexander~J. Rasmussen.
\newblock Uniform hyperbolicity of the graphs of nonseparating curves via
  bicorn curves.
\newblock {\em Proc. Amer. Math. Soc.}, 148(6):2345--2357, 2020.

\bibitem{rasmussen_geometry}
Alexander~J. Rasmussen.
\newblock Geometry of the graphs of nonseparating curves: covers and
  boundaries.
\newblock {\em Geom. Dedicata}, 212:73--106, 2021.

\bibitem{schaffercohen_graphs}
Anschel Schaffer-Cohen.
\newblock Graphs of curves and arcs quasi-isometric to big mapping class
  groups.
\newblock {\em Groups Geom. Dyn.}, 18(2):705--735, 2024.

\bibitem{thurston_geometry}
William~P. Thurston.
\newblock On the geometry and dynamics of diffeomorphisms of surfaces.
\newblock {\em Bull. Amer. Math. Soc. (N.S.)}, 19(2):417--431, 1988.

\bibitem{vaisala_gromov}
Jussi V{\"{a}}is\"{a}l\"{a}.
\newblock Gromov hyperbolic spaces.
\newblock {\em Expo. Math.}, 23(3):187--231, 2005.

\end{thebibliography}

\end{document}